 \title{Remarks on  the non-vanishing conjecture}
\author{Yoshinori  Gongyo}
\address{Graduate School of Mathematical Sciences, 
the University of Tokyo, 3-8-1 Komaba, Meguro-ku, Tokyo 153-8914, Japan.}
\email{gongyo@ms.u-tokyo.ac.jp}
\date{\today, version 2.04}
\thanks{The author is partially supported by Grant-in-Aid for JSPS Fellows $\sharp$22$\cdot$7399.}
\subjclass[2010]{14E30}
\keywords{abundance conjecture, non-vanishing conjecture}
\newcommand{\Supp}[0]{{\operatorname{Supp}}}
\newtheorem{thm}{Theorem}[section]
\newtheorem{lem}[thm]{Lemma}
\newtheorem{rem}[thm]{Remark}
\newtheorem{conj}[thm]{Conjecture}
\newtheorem{note}[thm]{Notation}
\newtheorem{cl}[thm]{Claim}
\theoremstyle{definition}
\newtheorem{defi}[thm]{Definition}
\newtheorem{case}{Case}
\newtheorem*{ack}{Acknowledgments}
\begin{document}
\bibliographystyle{amsalpha+}
 
 \maketitle
 
  \begin{abstract}We discuss a difference between the rational and the real non-vanishing conjecture for pseudo-effective log canonical divisors of log canonical pairs. We also show the log non-vanishing theorem for rationally connected varieties under assuming Shokurov's ACC conjectures.
 \end{abstract}

\section{Introduction}\label{real-intro}
Throughout this article, we work over $\mathbb{C}$, the complex number field. We will freely use the standard notations in \cite{kamama}, \cite{komo}, and \cite{bchm}. In this article we deal a topic related to the abundance conjecture: 
 
\begin{conj}[Abundance conjecture]\label{conj-abun-nef}Let $(X,\Delta)$ be a projective log canonical pair such that $\Delta$ is an effective $\mathbb{Q}$-divisor and $K_X+\Delta$ is nef. Then $K_X+\Delta$ is semi-ample.  
 
  \end{conj}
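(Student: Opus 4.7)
The plan is to attack the conjecture by the classical three-step reduction: non-vanishing, equality of Kodaira dimensions, and log base-point-freeness. Since the hypothesis already supplies nefness, the proof would proceed by first showing (i) $\kappa(X,K_X+\Delta)\geq 0$ (log non-vanishing); then (ii) $\kappa(X,K_X+\Delta)=\nu(X,K_X+\Delta)$, where $\nu$ denotes Nakayama's numerical Kodaira dimension; and finally (iii) deducing semi-ampleness from the equality in (ii) via the log base-point-free theorem of Kawamata--Reid--Shokurov in the klt case, or Fujino in the lc case.

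For (i) I would try to produce a section of some $m(K_X+\Delta)$ by one of two routes. The first is to run a $(K_X+\Delta)$-trivial MMP, pass to a dlt model, and lift sections from an appropriate log canonical center $S$ using Hacon--McKernan-type extension, granting non-vanishing for the restricted pair $(S,\Delta_S)$ by induction on dimension. The second is geometric: if $X$ admits a non-trivial MRC quotient, reduce to the base via Ambro's canonical bundle formula, whereas on the rationally connected part one would appeal to the methods that the present paper develops.

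For (ii) I would pass to the Iitaka fibration $f:X\dashrightarrow Y$ attached to $K_X+\Delta$, resolve it, and combine Fujino--Mori's canonical bundle formula with Viehweg's weak positivity of direct images of relative pluricanonical sheaves. Induction on dimension, applied to the general fiber of $f$ on which $K_X+\Delta$ is big, should then force $\nu$ down to $\kappa$. Step (iii) is essentially a citation: Fujino's log base-point-free theorem, which rests on his theory of quasi-log schemes and the associated vanishing theorems, applies once $\kappa=\nu$ holds and the pair is lc.

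The principal obstacle is (i). In dimension $\leq 3$ log non-vanishing is known (Miyaoka, Kawamata, Keel--Matsuki--McKernan), but in dimension $\geq 4$ it remains open even in the klt setting, and every attempt to construct a global section from nefness alone founders on the absence of an auxiliary big divisor to feed into a Kawamata--Viehweg--Nadel vanishing argument. The paper's contribution, as indicated in the abstract, is to resolve this step conditionally for rationally connected varieties under Shokurov's ACC conjectures, and in parallel to clarify the subtle distinction between the rational and real formulations of non-vanishing in the lc context.
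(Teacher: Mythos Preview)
The statement you are addressing is Conjecture~1.1, the abundance conjecture itself. The paper does \emph{not} prove it and makes no attempt to do so: it is stated as an open problem, and indeed the main theorem (Theorem~1.5) takes abundance in dimension $\leq n-1$ as a standing hypothesis. There is therefore no proof in the paper to compare your proposal against.

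What you have written is not a proof but a survey of the standard three-step program (non-vanishing, $\kappa=\nu$, base-point-freeness), and you correctly identify step~(i) as the essential obstruction. Your final paragraph even acknowledges this: non-vanishing is open in dimension $\geq 4$, and the paper's actual content is a conditional reduction of the $\mathbb{R}$-coefficient lc non-vanishing conjecture to the $\mathbb{Q}$-coefficient klt case (Theorem~1.5), together with a conditional non-vanishing result for rationally connected varieties (Theorem~4.1), both under the ACC conjectures. So your proposal is not wrong as a description of the landscape, but it is not a proof of Conjecture~1.1, and the paper does not claim one either.
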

  
Let $\mathbb{K}$ be the real number field $\mathbb{R}$ or the rational number field $\mathbb{Q}$. The following conjecture seems to be the most difficult and important conjecture for proving Conjecture \ref{conj-abun-nef}:

 \begin{conj}[Non-vanishing conjecture]\label{conj-non-vani-k}Let $(X,\Delta)$ be a projective log canonical pair such that $\Delta$ is an effective  $\mathbb{K}$-divisor and $K_X+\Delta$ is pseudo-effective. Then there exists an effective $\mathbb{K}$-divisor $D$ such that $D \sim_{\mathbb{K}}K_X+\Delta$.  
 
  \end{conj}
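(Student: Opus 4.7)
The plan is to attack Conjecture \ref{conj-non-vani-k} by a step-wise reduction followed by a case split on the numerical Kodaira dimension $\nu = \nu(K_X+\Delta)$. First I would perform the usual preparatory moves: pass to a $\mathbb{Q}$-factorial dlt model via a dlt blow-up, then try to reduce to the case in which $K_X+\Delta$ is already nef. This reduction is itself delicate in the log canonical setting because existence of lc minimal models is not known in full generality, but one may run a $(K_X+\Delta)$-MMP with scaling on the dlt modification; since $K_X+\Delta$ is pseudo-effective, no Mori fibre contraction can terminate the program, so conditional on the existence of lc minimal models (a standard hypothesis in this circle of ideas) we may assume $K_X+\Delta$ is nef.

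Once $K_X+\Delta$ is nef, I would split according to $\nu$. If $\nu = \dim X$, i.e.\ $K_X+\Delta$ is big and nef, an lc-to-klt perturbation combined with Kawamata's non-vanishing theorem (via asymptotic Riemann--Roch after passing to a terminal model) produces the required section. If $0 < \nu < \dim X$, I would try to use Nakayama's $\sigma$-decomposition and Ambro--Fujino's canonical bundle formula to produce a nontrivial fibration associated to $K_X+\Delta$, pull back an effective divisor obtained by induction on dimension on the base, and correct by exceptional and vertical terms.

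The step I expect to be the main obstacle is the case $\nu(K_X+\Delta) = 0$: producing an effective $\mathbb{K}$-divisor $\mathbb{K}$-linearly equivalent to a nef divisor that is numerically trivial. In the klt case this is closely tied to structure results for varieties with numerically trivial log canonical class (work of Campana--Peternell, Druel, Cao--H\"oring) and to Simpson-type theorems on flat bundles; extending these to the log canonical setting, where the boundary can carry essential topology, is exactly the place where current techniques run out. I would therefore try first to prove vanishing of a suitable augmented irregularity after an \'etale cover to gain a section in $H^{0}(\mathcal{O}_{X}(m(K_X+\Delta)))$ for some $m$, but I expect this to require genuinely new input beyond the current MMP technology.

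Finally, the distinction between $\mathbb{K}=\mathbb{Q}$ and $\mathbb{K}=\mathbb{R}$ should be handled separately. The natural strategy is to write $\Delta = \sum r_i \Delta_i$ inside a rational polytope of lc boundaries and, assuming the $\mathbb{Q}$-statement at each vertex, attempt to combine the resulting effective representatives convexly. As the abstract already warns, the naive convex-combination step fails: the $\mathbb{R}$-statement is strictly stronger than the formal $\mathbb{Q}$-consequence, so this passage must be treated as an essentially independent problem rather than a free corollary of the rational case.
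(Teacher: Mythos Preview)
This statement is Conjecture~\ref{conj-non-vani-k}, and the paper does \emph{not} prove it: it is posed as an open conjecture. The paper's actual contribution (Theorem~\ref{main-real}) is a conditional \emph{reduction}: assuming the global ACC, ACC for lc thresholds, and abundance in dimension $\leq n-1$, the $\mathbb{Q}$-case for $n$-dimensional klt pairs implies the $\mathbb{R}$-case for $n$-dimensional lc pairs. So there is no ``paper's own proof'' of Conjecture~\ref{conj-non-vani-k} to compare your proposal against.

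Viewed as a strategy for the open conjecture, your outline is the standard one and you locate the obstacles in the right places (existence of lc minimal models, the $\nu=0$ case, the $\mathbb{Q}\to\mathbb{R}$ passage). But as written it is a roadmap with conditional steps, not a proof: the reduction to the nef case already presupposes termination/existence of minimal models in the lc setting, and the $\nu=0$ step is exactly the unknown heart of the problem, as you yourself concede. The paper addresses only the last of your listed difficulties---the passage from $\mathbb{Q}$-klt to $\mathbb{R}$-lc---and it does so by a mechanism quite different from convex combinations over a rational polytope. Instead, it approximates $\Delta$ from below by $\Delta_i$ with $K_X+\Delta_i$ not pseudo-effective, runs MMPs to Mori fibre spaces $X'\to Z$, uses the ACC conjectures (Lemma~\ref{term pe fibration lemma} and Claims~\ref{rem-real-lct-term}, \ref{rem-real-gacc-term}) to force $K_{X'}+\Delta'\equiv_{f'}0$ with $(X',\Delta')$ lc, and then applies Ambro's canonical bundle formula plus induction on $\dim Z$ to pull back an effective representative. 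Your final paragraph correctly anticipates that the naive polytope argument fails; you should be aware that the paper's substitute is precisely this fibre-space-plus-ACC argument, not anything resembling your $\nu$-case split.
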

  Note that the above conjecture is obviously true for big log canonical divisors. Thus it is important for pseudo-effective log canonical divisors which are not big.
In this article, we study a difference between Conjecture \ref{conj-non-vani-k} for $\mathbb{K}=\mathbb{Q}$ and $\mathbb{R}$. One of the importence of Conjecture \ref{conj-non-vani-k} for $\mathbb{K} =\mathbb{R}$ is motivated in Birkar's framework on the existence of minimal models \cite{bir-exiII}.  In his construction, Conjecture \ref{conj-non-vani-k} must be formulated for {\em log canonical} pairs with {\em $\mathbb{R}$-boundary} when we construct minimal models for even smooth projective varieties. For reducing Conjecture \ref{conj-non-vani-k} in the case where $\mathbb{K}=\mathbb{R}$ to the case where $(X,\Delta)$ is kawamata log terminal with $\mathbb{Q}$-boundary, we need the following two conjectures (cf.~Lemma \ref{term pe fibration lemma}):

\begin{conj}[Global ACC conjecture, cf.{~\cite[Conjecture 2.7]{bSh}, \cite[Conjecture 8.2]{dhp-ext}}]\label{gacc} Let  $d\in \mathbb{N}$ and  $I \subset [0,1]$ a set satisfying the DCC. Then there is a finite subset $I_0 \subset I$ such that if 
 \begin{enumerate}
\item $X$ is a projective variety of dimension $d$,
\item $(X,\Delta  )$ is log canonical,
\item $\Delta =\sum \delta _i \Delta _i $ where $\delta _i \in I$,
\item $K_X+\Delta \equiv 0$,
 \end{enumerate}
then $\delta _i \in I_0$. 
 \end{conj}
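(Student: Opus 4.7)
The plan is to proceed by induction on the dimension $d$. The base case $d=1$ is immediate: if $K_X+\Delta\equiv 0$, then either $X$ is an elliptic curve with $\Delta=0$, or $X=\mathbb{P}^1$ and the coefficients satisfy $\sum_i \delta_i n_i = 2$ for some positive integers $n_i$; the DCC hypothesis on $I$ then forces the set of admissible coefficients to be finite, so one may take $I_0$ to be the (finite) set of all $\delta_i$ that actually occur.

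For the inductive step, I argue by contradiction. If no finite $I_0$ works in dimension $d$, one extracts a sequence $(X_n,\Delta_n)$ with some coefficient $\delta_{j_n,n}\in I$ which, after passing to a subsequence using the DCC of $I$, is strictly increasing with limit $\delta^\infty$. Enlarging $\delta_{j_n,n}$ slightly (while remaining inside $I$ or its closure) turns the numerically trivial pair into one whose log canonical divisor is not pseudo-effective. A log canonical MMP for the perturbed pair should then produce a Mori fiber space $\pi_n\colon Y_n\to Z_n$. On a general fiber $F_n$ the restriction $(K_{Y_n}+\Delta_{Y_n})|_{F_n}$ is numerically trivial, so the induction hypothesis applied to $F_n$ would confine the horizontal coefficients to a finite set, contradicting strict monotonicity whenever $j_n$ labels a horizontal component. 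When $j_n$ labels a vertical component, a canonical bundle formula along $\pi_n$ would transfer the problem to $Z_n$, whose boundary inherits DCC coefficients via the discriminant part of adjunction, again contradicting the induction hypothesis in dimension less than $d$.

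The principal obstacle will be controlling the behaviour of the coefficients under divisorial and fiber-type adjunction. One needs to know that the \emph{different} produced by restriction to a log canonical centre, as well as the discriminant part on the base of a fibration, send DCC coefficient sets to DCC coefficient sets, and that the moduli part in the canonical bundle formula is boundable uniformly in the family; this is the substance of effective adjunction, and it is naturally coupled with Shokurov's ACC conjecture for log canonical thresholds, forcing a simultaneous induction on the two statements. Two further technical difficulties are that the MMP must be run in the strictly log canonical (not merely klt) category, which necessitates dlt modifications and careful tracking of non-klt centres, and that boundedness of the log Fano fibers produced by the MFS is needed to apply the inductive hypothesis uniformly, tying the problem to the BAB-type boundedness conjectures.
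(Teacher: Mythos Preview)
The statement you are attempting to prove is Conjecture~\ref{gacc}, the Global ACC conjecture. In this paper it is \emph{not} a theorem but a standing hypothesis: the author explicitly assumes it (together with Conjecture~\ref{acclct}) in order to deduce Theorem~\ref{main-real} and Theorem~\ref{main-RC-non-van}, and merely remarks that a proof has been announced by Hacon--M\textsuperscript{c}Kernan--Xu. There is therefore no proof in the paper to compare your proposal against.

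What you have written is a reasonable high-level sketch of the inductive strategy that one would expect for such a statement (and indeed resembles the eventual Hacon--M\textsuperscript{c}Kernan--Xu approach: induction on dimension, perturb to run an MMP to a Mori fiber space, apply induction on the general fiber or on the base via adjunction and a canonical bundle formula). But you should be aware that you are not reconstructing something the paper does; you are outlining a proof of an open (at the time of writing) conjecture that the paper takes as a black box. Moreover, as you yourself note, the sketch is far from complete: the preservation of the DCC under adjunction and under the canonical bundle formula, the effectivity of the moduli part, boundedness of the fibers, and running the MMP in the strictly log canonical setting are all substantial results, not routine technicalities, and none of them are supplied here or in the paper.
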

 
  \begin{conj}[ACC conjecture for log canonical thresholds, cf. {\cite[Conjecture 1.7]{bSh}, \cite[Conjecture 8.4]{dhp-ext}}]\label{acclct} Let $d \in \mathbb{N}$, $\Gamma \subset [0,1]$ be a set satisfying the DCC and, let $S \subset \mathbb{R} _{\geq 0}$ be a finite set. Then the set
  $$\{{\rm lct}(X,\Delta;D )|\ (X,\Delta )\ {\rm is\ lc},\ \dim X=d,\ \Delta \in \Gamma,\ D\in S\}$$
  satisfies the ACC. Here $D$ is $\mathbb{R}$-Cartier and $\Delta\in \Gamma$ (resp.~$D\in S$) means $\Delta=\sum \delta _i\Delta _i$ where $\delta _i\in \Gamma$ (resp.~$D=\sum d_iD_i$ where $d_i\in S$)  and ${\rm lct}(X,\Delta;D )={\rm sup}\{ t\geq 0|(X,\Delta +tD)\ {\rm is\ lc}\}$. 
  \end{conj}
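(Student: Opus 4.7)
The plan is a contradiction argument by induction on the dimension $d$. Suppose there were sequences $(X_n,\Delta_n,D_n)$ satisfying the hypotheses with $t_n := {\rm lct}(X_n,\Delta_n;D_n)$ strictly increasing. After passing to a log resolution of $(X_n,\Delta_n+D_n)$ one may assume $X_n$ is smooth and $\Supp(\Delta_n+D_n)$ has simple normal crossings; then
\[
t_n = \min_{E\subset \Supp D_n} \frac{1-\mathrm{coeff}_E \Delta_n}{\mathrm{coeff}_E D_n},
\]
and by definition some prime divisor $E_n\subset X_n$ realises this minimum, so that $E_n$ appears with coefficient exactly $1$ in $\Delta_n+t_nD_n$, while the rest of the boundary has coefficients at most $1$.

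The decisive step is adjunction along $E_n$: one obtains a pair $(E_n,\Theta_n)$ with
\[
K_{E_n}+\Theta_n=(K_{X_n}+\Delta_n+t_nD_n)|_{E_n},
\]
whose different $\Theta_n$ has coefficients in a DCC set $\Gamma'=\Gamma'(\Gamma,S)$ independent of $n$. I would next run a relative $(K_{X_n}+\Delta_n+t_nD_n)$-MMP (e.g.\ over the base of the lc-trivial fibration supplied by the canonical bundle formula) so that the restriction $K_{E_n}+\Theta_n$ becomes numerically trivial. Then Conjecture \ref{gacc} in dimension $d-1$ forces the coefficients of $\Theta_n$ to lie in a finite set $\Gamma'_0\subset \Gamma'$. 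Because the different formula expresses each $\mathrm{coeff}_F\Theta_n$ as an explicit rational function of $t_n$ together with coefficients of $\Delta_n$ and $D_n$ along the codimension-two strata meeting $F$, this pins $t_n$ down to a finite set, contradicting the strict ascent. The case $d=1$ is immediate: on a curve ${\rm lct}(X,\Delta;D)=\min_P(1-\mult_P\Delta)/\mult_P D$ is a ratio of a DCC numerator and a finite denominator, so its image satisfies the ACC and starts the induction.

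The main obstacle I anticipate is the MMP step that is supposed to bring $(E_n,\Theta_n)$ into a position where Conjecture \ref{gacc} applies: termination of a log MMP for lc pairs with $\mathbb{R}$-boundary is itself open in general, so one must either work inside a class where termination is known (e.g.\ after a klt approximation, or on a suitable relative model) or bypass termination by taking a limit model. One must also verify that throughout the MMP the log canonical centre $E_n$ is preserved with its coefficient $1$ intact. Finally, since the instance of Conjecture \ref{gacc} that one invokes lives in dimension $d-1$, the cleanest presentation is a simultaneous induction on $d$ in both Conjecture \ref{gacc} and Conjecture \ref{acclct}, rather than an attempt to deduce one from the other in isolation.
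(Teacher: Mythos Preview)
The statement you are attempting to prove is labelled \texttt{conj} in the paper, i.e.\ it is Conjecture~\ref{acclct}, not a theorem. The paper gives \emph{no proof} of it whatsoever: it is introduced as one of Shokurov's ACC conjectures, the author remarks that a proof has been \emph{announced} by Hacon--$\mathrm{M^{c}}$Kernan--Xu (see the sentence immediately following the statement), and thereafter the conjecture is used purely as a standing hypothesis in Lemma~\ref{term pe fibration lemma} and Theorem~\ref{main-real}. There is therefore nothing in the paper to compare your attempt against.

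As to the content of your sketch: the broad strategy---pass to a log resolution, single out a divisor $E_n$ computing the threshold, apply adjunction, and feed the resulting lower-dimensional pair into the global ACC---is indeed the shape of the argument eventually carried out by Hacon--$\mathrm{M^{c}}$Kernan--Xu. But the difficulties you flag are genuine and not minor. After a log resolution the coefficients of $\Delta_n$ and $D_n$ no longer lie in the original DCC/finite sets, so the reduction to the smooth snc case is not free; one must instead extract a divisor computing the lct via a carefully controlled birational modification. More seriously, arranging that $(E_n,\Theta_n)$ is numerically trivial (so that Conjecture~\ref{gacc} applies) is the heart of the matter, and your proposed ``run an MMP'' step is exactly where the real work lies; in the published proof this requires an intricate simultaneous induction on both ACC statements together with auxiliary boundedness results, not a direct appeal to an MMP whose termination is itself unknown. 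So your outline identifies the right pieces but does not yet assemble them into a proof, and in any case the present paper makes no attempt to do so.
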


The proofs of the above two conjectures are announced by Hacon--$\mathrm{M^{c}}$Kernan--Xu.  See \cite[Remark 8.3]{dhp-ext}.

 Namely the main theorem of this article is the following:

\begin{thm}\label{main-real}Assume that the global ACC conjecture (\ref{gacc}) in dimension $\leq n$, the ACC conjecture for log canonical thresholds (\ref{acclct}) in dimension $\leq n$, and the abundance conjecture (\ref{conj-abun-nef}) in dimension $\leq n-1$. Then the non-vanishing conjecture (\ref{conj-non-vani-k}) for $n$-dimensional klt pairs in the case where $\mathbb{K}=\mathbb{Q}$ implies that for $n$-dimensional lc pairs in the case where $\mathbb{K}=\mathbb{R}$.   

\end{thm}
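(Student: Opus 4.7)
The plan is to implement a three-step reduction in the spirit of Birkar's framework \cite{bir-exiII}, moving successively from the lc/$\mathbb{R}$-setting to the klt/$\mathbb{Q}$-setting. Let $(X,\Delta)$ be a projective lc pair of dimension $n$ with an effective $\mathbb{R}$-boundary $\Delta$ and pseudo-effective $K_X+\Delta$. First, I take a dlt modification $f\colon Y \to X$ so that $(Y,\Gamma)$ is dlt with $K_Y + \Gamma = f^{*}(K_X + \Delta)$. Because $f_{*}$ carries effective $\mathbb{R}$-divisors to effective $\mathbb{R}$-divisors, it suffices to prove non-vanishing for $(Y,\Gamma)$, so I may assume $(X,\Delta)$ itself is dlt.

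Next, I reduce the $\mathbb{R}$-coefficient problem to finitely many $\mathbb{Q}$-coefficient problems. By the ACC conjecture for log canonical thresholds (\ref{acclct}), there is an open neighborhood $U$ of $\Delta$ in the affine space of $\mathbb{R}$-boundaries supported on $\Supp \Delta$ such that $(X,\Delta')$ remains lc for every $\Delta' \in U$. I would then write $\Delta = \sum_{i=1}^{k} r_i \Delta_i$ as a convex combination with $r_i > 0$, $\sum r_i = 1$, and each $\Delta_i \in U$ having $\mathbb{Q}$-coefficients. The essential point, supplied by Lemma \ref{term pe fibration lemma}, is that the $\Delta_i$ can be chosen so that each $K_X + \Delta_i$ remains pseudo-effective: this uses abundance in dimension $\leq n-1$ to handle the non-big case via a canonical-bundle-formula / Iitaka fibration argument, and the global ACC conjecture (\ref{gacc}) to control the resulting fiberwise coefficients. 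If some $\Delta_i$ still satisfies $\lfloor \Delta_i \rfloor \neq 0$, I further reduce to the klt case by restricting via adjunction to a component $S \subset \lfloor \Delta_i \rfloor$, applying abundance in dimension $\leq n-1$ to the induced $(n-1)$-dimensional pair, and lifting sections back to $X$ by standard extension arguments.

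After these reductions each $(X,\Delta_i)$ is klt with $\mathbb{Q}$-boundary and $K_X + \Delta_i$ pseudo-effective, so the hypothesis yields effective $\mathbb{Q}$-divisors $D_i \sim_{\mathbb{Q}} K_X + \Delta_i$, and then $D := \sum r_i D_i$ is an effective $\mathbb{R}$-divisor with $D \sim_{\mathbb{R}} K_X + \Delta$, as required. The main obstacle is the second step, namely arranging that the $\mathbb{Q}$-perturbations $\Delta_i$ preserve pseudo-effectivity: the pseudo-effective cone is not known to be rationally polyhedral in general, so naive perturbation in the affine space of boundaries can easily destroy pseudo-effectivity. This is precisely where the combination of global ACC, ACC for lct, and abundance in lower dimensions enters through Lemma \ref{term pe fibration lemma}, which in the non-big case exploits a lower-dimensional fibration structure to control the $\mathbb{Q}$-approximation explicitly.
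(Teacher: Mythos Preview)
Your proposal has a genuine gap: you have inverted the role of Lemma~\ref{term pe fibration lemma}. That lemma does \emph{not} furnish $\mathbb{Q}$-perturbations $\Delta_i$ with $K_X+\Delta_i$ pseudo-effective. Its hypothesis is precisely the opposite: one feeds in a sequence $\Delta_i\nearrow\Delta$ with each $K_X+\Delta_i$ \emph{not} pseudo-effective, and the output is a Mori-fiber-space-type structure $f'\colon X'\to Z$ with $K_{X'}+\Delta'\equiv_{f'}0$. The paper's architecture in Case~\ref{QtoRinKLT} is therefore a dichotomy: either some rational $\Delta_i\le\Delta$ already has $K_X+\Delta_i$ pseudo-effective (and the klt/$\mathbb{Q}$ hypothesis applies directly), or no such $\Delta_i$ exists, in which case Lemma~\ref{term pe fibration lemma} produces the fibration, $\kappa_\sigma$ vanishes on general fibers, and one concludes via the $\nu=0$ abundance theorem (when $\dim Z=0$) or via a good minimal model over $Z$, the canonical bundle formula, and induction on $\dim Z$ (when $\dim Z\ge 1$). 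Your convex-combination step, by contrast, asks for rational $\Delta_i$ on the pseudo-effective side, which is exactly what can fail when the pseudo-effective threshold is irrational; nothing in Lemma~\ref{term pe fibration lemma} repairs this.

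Your dlt-to-klt reduction is also not the paper's, and as written it does not go through. ``Restrict to $S\subset\lfloor\Delta_i\rfloor$, apply abundance in dimension $n-1$, then lift sections back by standard extension arguments'' is the step that is not standard: no extension theorem in this generality lifts an effective divisor on $S$ to one on $X$ $\mathbb{R}$-linearly equivalent to $K_X+\Delta$. The paper's Case~\ref{QtoRinDLT} instead again invokes Lemma~\ref{term pe fibration lemma} (applied to $\Delta_i=\Delta-\epsilon_iS$), passes to a good minimal model $(W',\Gamma')\to Z$ over the resulting base (using abundance in dimension $\le n-1$ and \cite[Theorem~4.12]{fg3}), checks via the negativity condition in Definition~\ref{real-def-minimalmodelsenseofBS} that no component of $S$ is contracted, and then uses that some strict transform $T_k$ \emph{dominates} the canonical base $Z'$ to descend the problem: since $K_{W'}+\Gamma'\sim_{\mathbb{R}}g^*C$, non-vanishing on $T_k$ (by induction on dimension) forces $C\sim_{\mathbb{R}}G\ge 0$ on $Z'$, hence $K_{W'}+\Gamma'\sim_{\mathbb{R}}g^*G\ge 0$. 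The flow is from $X$ down to the base $Z'$, not from $S$ up to $X$.
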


Under assuming that Conjecture \ref{gacc} and Conjecture \ref{acclct} hold, by combining with \cite[Theorem 8.8]{dhp-ext}, we can reduce Conjecture \ref{conj-non-vani-k} in the case where $\mathbb{K}=\mathbb{R}$ to the case where $X$ is smooth and $\Delta=0$. The proof of Theorem \ref{main-real} was inspired by Section $8$ in \cite{dhp-ext} and discussions the author had with Birkar in Paris.

In Section \ref{RC-non}, we also show the log non-vanishing theorem (= Theorem \ref{main-RC-non-van}) for rationally connected varieties by the same argument.

\begin{ack}
The author wishes to express his deep gratitude to Professor Caucher Birkar for discussions. He thanks Professor Osamu Fujino for careful reading the first version of this article and pointing out several mistakes. He would like to thank Professor Christpher D. Hacon for answering his question about Section $8$ in \cite{dhp-ext}, and Professors Hiromichi Takagi and Chenyang Xu  for various comments.  He also thanks Professor Claire Voisin and Institut de Math\'ematiques de Jussieu (IMJ) for their hospitality. He partially worked on this article when he stayed at IMJ. He is grateful to it for its hospitality. 
\end{ack}

\begin{note}\label{notation} A variety $X/Z$ means that a quasi-projective normal variety $X$ is projective over a quasi-projective variety $Z$. A rational map $f: X \dashrightarrow Y/Z $ denotes a rational map $X \dashrightarrow Y$ over  $Z$.  For a contracting birational map $X  \dashrightarrow  Y/Z$ and   an $\mathbb{R}$-Weil divisor $D$ on $X$, an $\mathbb{R}$-Weil divisor $D^Y$ means the strict transform of $D$ on $Y$.

\end{note}

\section{On the existence of minimal models after Birkar}\label{real-exi-mm}In this section we introduce the definitions of minimal models in the sense of Birkar--Shokurov and some results on the existence of minimal models after Birkar.

\begin{defi}[cf. {\cite[Definition 2.1]{bir-exiII}}]\label{real-def-minimalmodelsenseofBS}
A pair $(Y/Z,B_Y)$ is a \emph{log birational model} of $(X/Z,B)$ if we are given a birational map
$\phi\colon X\dashrightarrow Y/Z$ and $B_Y=B^\sim+E$ where $B^\sim$ is the birational transform of $B$ and 
$E$ is the reduced exceptional divisor of $\phi^{-1}$, that is, $E=\sum E_j$ where $E_j$ are the
exceptional over $X$ prime divisors on $Y$. A log birational model $(Y/Z,B_Y)$ is a \emph{nef model} of $(X/Z,B)$ if in addition\\\
(1)$(Y/Z,B_Y)$ is $\mathbb{Q}$-factorial dlt, and\\\
(2)$K_Y+B_Y$ is nef over $Z$.\\\
And  we call a nef model $(Y/Z,B_Y)$ a \emph{log minimal model of $(X/Z,B)$ in the sense of Birkar--Shokurov} if in addition\\\
(3) for any prime divisor $D$ on $X$ which is exceptional over $Y$, we have
$$
a(D,X,B)<a(D,Y,B_Y).
$$
\end{defi}

\begin{rem}\label{rem_real_bir1}The followings are remarks:
\begin{itemize}
\item[(1)] Conjecture \ref{conj-non-vani-k} in the case where the dimension $\leq n-1$ and $\mathbb{K}=\mathbb{R}$  implies the existence of relative log minimal models in the sense of Birkar--Shokurov over a quasi-projective base $Z$ for effective dlt pairs over $Z$ in dimension $n$. See \cite[Corollary 1.7 and Theorem 1.4]{bir-exiII}.

\item[(2)] Conjecture \ref{conj-non-vani-k} in the case where the dimension $\leq n-1$ and $\mathbb{K}=\mathbb{R}$ implies Conjecture \ref{conj-non-vani-k} in the case where the dimension $\leq n$ and $\mathbb{K}=\mathbb{R}$ over a non-point quasi-projective base $Z$. See \cite[Lemma 3.2.1]{bchm}.

\item[(3)] When $(X/Z,B)$ is purely log terminal, a log minimal model of $(X/Z,B)$ in the sense of Birkar--Shokurov is the traditional one as in \cite{komo} and \cite{bchm}. See \cite[Remark 2.6]{bir-exiI}.

\end{itemize}

\end{rem}

\section{Proof of Theorem \ref{main-real}}\label{section-proof of main real}
In this section, we give the proof of Theorem \ref{main-real}. The proof of following lemma is essentially same as the proof of  \cite[Proposition 8.7]{dhp-ext}.
\begin{lem}[cf.{ \cite[Proposition 8.7]{dhp-ext}}]\label{term pe fibration lemma}Assume that the global ACC conjecture (\ref{gacc}) in dimension $\leq n$ and the ACC conjecture for log canonical thresholds (\ref{acclct}) in dimension $\leq n$. Let $(X,\Delta)$ be a $\mathbb{Q}$-factorial projective dlt pair such that $\Delta$ is an $\mathbb{R}$-divisor and $K_X+\Delta$ is pseudo-effective. Suppose that there exists a sequence of effective divisors $\{\Delta_i\}$ such that $\Delta_i\leq \Delta_{i+1}$, $K_X+\Delta_i$ is not pseudo-effective for any $i\geq 0$, and
$$\lim_{i \to \infty}\Delta_i=\Delta.$$
Then there exists a contracting birational map $\varphi:X \dashrightarrow X'$ such that there exists a projective morphism $f':X' \to Z$ with connected fibers satisfying:
\begin{itemize}
\item[(1)] $(X',\Delta')$ is $\mathbb{Q}$-factorial log canonical and $\rho(X'/Z)=1$,

\item[(2)] $K_{X'}+\Delta' \equiv_{f'} 0$, 

\item[(3)] $\Delta'-\Delta'_i$ is $f'$-ample for some $i$, and

\item[(4)] $\mathrm{dim}\,X > \mathrm{dim}\,Z,$

\end{itemize}
where $\Delta'$ and $\Delta'_i$ are the strict transform of $\Delta$ and $\Delta_i$ on $X'$.
\end{lem}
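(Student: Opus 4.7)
My plan follows the strategy of \cite[Proposition 8.7]{dhp-ext}: for each index $i$ I build a Mori fiber space via a $(K_X+\Delta_i)$-MMP, and then exploit the ACC conjectures to force one of those fibrations to be compatible with $(X,\Delta)$ itself.

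\textbf{Step 1 (Mori fiber space for each $i$).} Fix $i$. Since $K_X+\Delta_i$ is not pseudo-effective, I run the $(K_X+\Delta_i)$-MMP with scaling of a suitable ample divisor. Under Conjectures \ref{gacc} and \ref{acclct}, Birkar's termination results for MMP with scaling apply, so this MMP terminates; since $K_X+\Delta_i$ is not pseudo-effective, termination produces a Mori fiber space --- a contracting birational map $\varphi_i\colon X\dashrightarrow X_i'$ and a projective morphism $f_i'\colon X_i'\to Z_i$ with connected fibers, $\rho(X_i'/Z_i)=1$, $\dim X_i'>\dim Z_i$, and $K_{X_i'}+\Delta_i^{X_i'}$ strictly $f_i'$-anti-ample. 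This already yields condition (4) and, together with preservation of $\mathbb{Q}$-factoriality and dlt along the MMP, the $\mathbb{Q}$-factorial and $\rho=1$ parts of (1).

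\textbf{Step 2 (ACC pins down the right $i$).} Since $\rho(X_i'/Z_i)=1$, the family
\[
K_{X_i'}+\Delta_i^{X_i'}+t\,(\Delta-\Delta_i)^{X_i'},\qquad t\ge 0,
\]
is $f_i'$-anti-ample at $t=0$ and crosses $f_i'$-triviality at a unique value $t_i>0$. Restricting the $f_i'$-trivial class at $t=t_i$ to a general fiber $F_i$ yields a log canonical pair of dimension $\le n-1$ with numerically trivial log canonical divisor, whose boundary coefficients come from a DCC set generated by the coefficients of $\Delta$ together with the reals $t_i$. Conjecture \ref{gacc} then forces these coefficients to lie in a fixed finite subset $I_0$ independent of $i$; combined with $\Delta_i\to\Delta$, and hence $\Delta_i^{X_i'}|_{F_i}\to\Delta^{X_i'}|_{F_i}$ in coefficients, this discreteness-plus-continuity squeeze gives $t_i=1$ for all sufficiently large $i$. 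For such $i$ one has $K_{X_i'}+\Delta^{X_i'}\equiv_{f_i'}0$, giving (2); and $(\Delta-\Delta_i)^{X_i'}$ cannot be $f_i'$-trivial without contradicting the strict $f_i'$-anti-ampleness at $t=0$, so it is $f_i'$-ample, giving (3). Finally, Conjecture \ref{acclct} applied to the perturbation from $\Delta_i^{X_i'}$ to $\Delta^{X_i'}$ shows that the log canonical threshold is at least $1$ in the limit, so $(X_i',\Delta^{X_i'})$ is log canonical, completing (1). Setting $X'=X_i'$, $Z=Z_i$, $f'=f_i'$ for such an $i$ finishes the construction.

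\textbf{Main obstacle.} The delicate point is the squeeze in Step 2: proving that the ACC on boundary coefficients of fiber-trivial lc pairs of dimension $\le n-1$, applied along the sequence $i\to\infty$, actually forces $t_i=1$. Without this rigidity, the Mori fiber space produced by the $(K_X+\Delta_i)$-MMP would have no a priori reason to satisfy $K_{X_i'}+\Delta^{X_i'}\equiv_{f_i'}0$. The interplay between the continuity of strict transforms (as $\Delta_i\to\Delta$) and the discreteness coming from Conjecture \ref{gacc} is precisely what closes the argument; Conjecture \ref{acclct} is then needed as a companion to ensure the limiting pair remains lc.
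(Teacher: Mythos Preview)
Your Step 2 contains a genuine circularity. You assert that restricting the $f_i'$-trivial class $K_{X_i'}+\Delta_i^{X_i'}+t_i(\Delta-\Delta_i)^{X_i'}$ to a general fiber $F_i$ yields a \emph{log canonical} pair, and then invoke Conjecture \ref{gacc}. But log canonicity at $t=t_i$ is precisely what is in doubt: the MMP only guarantees that $(X_i',\Delta_i^{X_i'})$ is dlt, and adding $t_i\,\Gamma_i^{X_i'}$ (with $\Gamma_i=\Delta-\Delta_i$) may push the pair beyond the lc threshold. You then propose to recover log canonicity \emph{afterwards} via Conjecture \ref{acclct}, but the application of global ACC already presupposed it. Moreover, Conjecture \ref{acclct} requires the coefficients of the added divisor to lie in a \emph{finite} set, whereas the coefficients of $\Gamma_i$ tend to $0$ as $i\to\infty$; your one-parameter family in $i$ does not satisfy this hypothesis, so the ACC for lct is not directly applicable either.

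The paper resolves this with a two-parameter scheme that you are missing. For each \emph{fixed} $i$ one introduces a second parameter $x\in[0,1)$ and runs a $(K_X+\Delta_i+x\Gamma_i)$-MMP (via \cite{bchm}) to a Mori fiber space $Y_{x,i}\to Z_{x,i}$. Because the input pair at parameter $x$ is lc, one has the crucial lower bound $\mathrm{lct}(Y_{x,i},\Delta_i^{Y_{x,i}};\Gamma_i^{Y_{x,i}})\ge x$; and since $i$ is fixed, the coefficients of $\Gamma_i$ form a finite set, so Conjecture \ref{acclct} now applies and forces this lct to be $\ge 1$ once $x$ is close enough to $1$ (Claim \ref{rem-real-lct-term}). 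Only after log canonicity has been secured in this way does one let $i$ vary and invoke Conjecture \ref{gacc} on the resulting lc, fiberwise numerically trivial pairs to obtain $K_{Y_i}+\Delta^{Y_i}\equiv_{f_i}0$ for some $i$ (Claim \ref{rem-real-gacc-term}). In short, the correct order is: ACC for lct first (to guarantee lc), then global ACC (to pin down numerical triviality for $\Delta$ itself). Your proposal reverses this order, and a single index $i$ cannot carry both arguments at once.
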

\begin{proof}Set $\Gamma_i=\Delta-\Delta_i$. Then $K_X+\Delta_i+x\Gamma_i$ is also not pseudo-effective for every non-negative number $x<1$. For any $i$ and non-negative number $x<1$, we can take a Mori fiber space $f_{x,i}:Y_{x,i} \to Z_{x,i}$ of $(X,\Delta_i+x\Gamma_i)$ by \cite{bchm}. Then there exists a positive number $\eta_{x,i}$ such that 
$$K_{Y_{x,i}}+ \Delta^{Y_{x,i}}_{i}+ \eta_{x,i} \Gamma^{Y_{x,i}}_{i} \equiv_{f_{x,i}} 0.$$
Note that $x <\eta_{x,i} \leq 1$ and $x\leq \mathrm{lct}(Y_{x,i}, \Delta^{Y_{x,i}}_{i};\Gamma^{Y_{x,i}}_{i})$ since $K_{Y_{x,i}}+ \Delta^{Y_{x,i}}_{i}+ \Gamma^{Y_{x,i}}_{i}$ is pseudo-effective.

\begin{cl}\label{rem-real-lct-term}
 When we consider an increasing sequence $\{x_j\}$ such that 
$$\lim_{j \to \infty}x_j=1,$$
it holds that $\mathrm{lct}(Y_{x_j,i}, \Delta^{Y_{x_j,i}}_{i};\Gamma^{Y_{x_j,i}}_{i})\geq 1$ for $j\gg 0$ 
\end{cl}

\begin{proof}[Proof of Claim \ref{rem-real-lct-term}] Put 
$$l_{j,i}=\mathrm{lct}(Y_{x_j,i}, \Delta^{Y_{x_j,i}}_{i};\Gamma^{Y_{x_j,i}}_{i}).$$
Assume by contradiction that $l_{j,i}< 1$
for some infinitely many $j$. Fix such an index $j_0$. Then we take a $j_1$ such that $l_{j_1,i}< 1$ and $l_{j_0,i}< x_{j_1} < 1.$ Since $l_{j_1,i}< 1$, we take $l_{j_1,i}< x_{j_2} < 1.$ By repeating, we construct  increasing sequences $\{x_{j_k}\}_{k}$ and $\{l_{j_k,i}\}_{k}$. Actually this is a contradiction to Conjecture \ref{acclct}.
\end{proof}
Thus, for any $i$, there exists non-negative number $y_i<1$ such that 
$$\lim_{i \to \infty}y_i=1,$$
$$K_{Y_{y_i,i}}+ \Delta^{Y_{y_i,i}}_{i}+ \eta_{y_i,i} \Gamma^{Y_{y_i,i}}_{i} \equiv_{f_{y_i,i}} 0,$$
and $(Y_{y_i,i}, \Delta^{Y_{y_i,i}}_{i}+ \eta_{y_i,i} \Gamma^{Y_{y_i,i}}_{i})$ is log canonical from Claim \ref{rem-real-lct-term}. Set $\Omega_{i}=\Delta_{i}+ \eta_{y_i,i} \Gamma_{i}$ and $Y_i=Y_{y_i,i}$. Then we see the following:

\begin{cl}\label{rem-real-gacc-term}
 It holds that $K_{Y_i}+\Delta^{Y_i}$ is $f_i:=f_{y_{i}, i}$-numerical trivial for some $i$.
\end{cl}
\begin{proof}[Proof of Claim \ref{rem-real-gacc-term}]
We can take a subsequence $\{y_{k_j}\}$ of  $\{y_{i}\}$ such that 
$$ \Omega_{k_{j}} \leq \Omega_{k_{j+1}}
$$
since $\Omega_{i} \to \Delta$ when $i \to \infty$. From Conjecture \ref{gacc}, by taking a subsequence again, we may assume that it holds that
$$\Omega_{k_{0}}^{Y_{k_0}}|_{F_{k_0}} = \Omega_{k_{l}}^{Y_{k_0}}|_{F_{k_0}}$$
for a general fiber $F_{k_0}$ of $f_{k_0}$ and $l>0$ since all coefficients of $\Omega_{k_{j}}^{Y_{k_j}}|_{F_{k_j}}$ have only finitely many possibilities. Let $i=k_0$, then we see that 
$$K_{Y_i}+\Delta^{Y_i} \equiv_{f_i} 0.
$$
\end{proof}
Thus we construct such a model as in Lemma \ref{term pe fibration lemma}. 
\end{proof}

\begin{rem}\label{rem-real-non-non-posi} We do not know whether the above birational map $\varphi$ is $(K_X+\Delta)$-non-positive or not.

\end{rem}

\begin{proof}[Proof of Theorem \ref{main-real}] We will show it by induction on dimension. In particular we may assume that Conjecture \ref{conj-non-vani-k} in the case where the dimension $\leq n-1$ and $\mathbb{K}=\mathbb{R}$ holds. Now we may assume that $(X,\Delta)$ is a $\mathbb{Q}$-factorial divisorial log terminal pair due to a {\em dlt blow-up} (cf. \cite[Theorem 3.1]{kokov-lc-dubois}, \cite[Theorem 10.4]{fujino-fund} and \cite[Section 4]{fujino-ss}). First we show Theorem 
 \ref{main-real} in the following case.
\begin{case}\label{QtoRinKLT} $(X,\Delta)$ is kawamata log terminal and $\Delta$ is an $\mathbb{R}$-divisor.

\end{case}

\begin{proof}[Proof of Case \ref{QtoRinKLT}]\label{proofofQtoRinKLT}We may assume that we can take a sequence of effective $\mathbb{Q}$-divisors $\{\Delta_i\}$ such that $\Delta_i\leq \Delta_{i+1}$, $K_X+\Delta_i$ is not pseudo-effective for any $i\geq 0$, and
$$\lim_{i \to \infty}\Delta_i=\Delta.$$
By Lemma \ref{term pe fibration lemma}, we can take a contracting birational map $\varphi:X \dashrightarrow X'$ such that there exists a projective morphism $f':X' \to Z$ with connected fibers satisfying:
\begin{itemize}
\item[(1)] $(X',\Delta')$ is $\mathbb{Q}$-factorial log canonical and $\rho(X'/Z)=1$,

\item[(2)] $K_{X'}+\Delta' \equiv_{f'} 0$, 

\item[(3)] $\Delta'-\Delta'_i$ is $f'$-ample for some $i$, and

\item[(4)] $\mathrm{dim}\,X > \mathrm{dim}\,Z,$

\end{itemize}
where $\Delta'$ and $\Delta'_i$ are the strict transform of $\Delta$ and $\Delta_i$ on $X'$. By taking resolution of $\varphi$, we may assume that $\varphi$ is morphism. Thus we see that $\kappa_{\sigma}((K_X+\Delta)|_{F})=0$ for a general  
fiber of $f' \circ \varphi$, where $\kappa_{\sigma}(\cdot)$ is the numerical dimension (cf. \cite{nakayama-zariski-abun} and \cite{lehmann-comparing}). When $\dim\,Z=0$, we see that $\kappa_{\sigma}(K_X+\Delta)=0$. Then, from the abundance theorem of numerical Kodaira dimension zero for $\mathbb{R}$-divisors (cf. \cite[Theorem 4.2]{ambro-canonical}, \cite[V, 4.9. Corollary]{nakayama-zariski-abun}, \cite[Corollaire 3.4]{duruel-nu0}, \cite{kawamata-abunnuzero}, \cite{ckp-num}, and \cite[Theorem 1.3]{g3}), we may assume that $\dim\,Z \geq1$. Then, by Remark \ref{rem_real_bir1} and Kawamata's theorem (cf. \cite[Theorem 1.1]{fujino-kawamata}, \cite[Theorem 6-1-11]{kamama}), there exists a good minimal model $f'':(X'',\Delta'')\to Z$ of $(X,\Delta)$ over $Z$. And let $g:X'' \to Z'$ be the morphism of the canonical model $Z'$ of $(X,\Delta)$. Then $Z' \to Z$ is a birational morphism. From Ambro's canonical bundle formula for $\mathbb{R}$-divisors (cf.~\cite[Theorem 4.1]{ambro-canonical} and \cite[Theorem 3.1]{fg2}) there exists an effective divisor $\Gamma_{Z'}$ on $Z'$ such that $K_{X''}+\Delta'' \sim_{\mathbb{R}}g^*(K_{Z'}+\Gamma_{Z'})$. By the hypothesis on induction, we can take an effective divisor $D'$ on $Z'$ such that $K_{Z'}+\Gamma_{Z'}\sim_{\mathbb{R}}D'$. 
\end{proof}
Next we show Theorem \ref{main-real} in the case where $(X,\Delta)$ is divisorial log terminal and $\Delta$ is an $\mathbb{R}$-divisor. 
\begin{case}\label{QtoRinDLT} $(X,\Delta)$ is divisorial log terminal and $\Delta$ is an $\mathbb{R}$-divisor.

\end{case}
\begin{proof}[Proof of Case \ref{QtoRinDLT}]\label{proofofQtoRinDLT}
We take a decreasing sequence $\{\epsilon_i\}$ of positive numbers such that $\lim_{i \to \infty}\epsilon_i=0$. Let $S=\sum S_k$ or $0$ be the reduced part of $\Delta$, $S_k$ its components, and $\Delta_i=\Delta-\epsilon_iS$. We show Theorem \ref{main-real} by induction on the number $r$ of the components of $S$. If $r=0$, Case \ref{QtoRinKLT} implies Conjecture \ref{conj-non-vani-k} for $K_X+\Delta$. When $r>0$, we may assume that $K_{X} +\Delta_i$ is not pseudo-effective from Case \ref{QtoRinKLT} and $K_X+\Delta-\delta S_k$ is not pseudo-effective for any $k$ and $\delta >0$. Then by Lemma \ref{term pe fibration lemma} we can take a contracting birational map $\varphi:X \dashrightarrow X'$ such that there exists a projective morphism $f':X' \to Z$ with connected fibers satisfying:
\begin{itemize}
\item[(1)] $(X',\Delta')$ is $\mathbb{Q}$-factorial log canonical and $\rho(X'/Z)=1$,

\item[(2)] $K_{X'}+\Delta' \equiv_{f'} 0$, 

\item[(3)] $\Delta'-\Delta'_i$ is $f'$-ample for some $i$, and

\item[(4)] $\mathrm{dim}\,X > \mathrm{dim}\,Z,$

\end{itemize}
where $\Delta'$ and $\Delta'_i$ are the strict transform of $\Delta$ and $\Delta_i$ on $X'$. Take log resolutions $p: W \to X$ of $(X,\Delta)$ and $q:W \to X'$ of $(X',\Delta')$ such that $\varphi \circ p=q$. Set the effective divisor $\Gamma$ satisfying
$$K_W+\Gamma =p^*(K_X+\Delta)+E,
$$
where $E$ is an effective divisor such that $E$ has no common components with $\Gamma$. Set the strict transform $\widetilde{S_k}$ and $\widetilde{S}$ of $S_k$ and $S$ respectively on $W$. From Lemma \ref{term pe fibration lemma} (3) $\Supp\, \widetilde{S}$ dominates $Z$.  By the same arguments as the proof of Case \ref{QtoRinKLT}, we may assume that $\dim\,Z\geq 1.$
Then, by Remark \ref{rem_real_bir1}, the abundance conjecture (\ref{conj-abun-nef}) in dimension $\leq n-1$, and \cite[Theorem 4.12]{fg3} (cf. \cite[Corollary 6.7]{fujino-bpf}), there exists a good minimal model $f':(W',\Gamma')\to Z$ of $(W,\Gamma)$ in the sense of Birkar--Shokurov over $Z$. If some $\widetilde{S_k}$ contracts by the birational map $W \dashrightarrow W'$ (may not be contracting), then  $K_W+\Gamma-\delta \widetilde{S_k}$ is pseudo-effective for some $\delta >0$ from the positivity property of the definition of minimal models (cf. Definition\,\ref{real-def-minimalmodelsenseofBS}). Thus $K_X+\Delta-\delta S_k( = p_*(K_W+\Gamma-\delta \widetilde{S_k}))$ is also pseudo-effective. But this is a contradiction to the assumption of $(X,\Delta)$. Thus we see that any $\widetilde{S_k}$ dose not contract by the birational map $W \dashrightarrow W'$. Let $g:W' \to Z'$ be the morphism of the canonical model $Z'$ of $(W,\Gamma)$. Then $Z' \to Z$ is a birational morphism since $\kappa_{\sigma}((K_W+\Gamma)|_{F})=0$ for a general fiber $F$ of $f' \circ q$.
Thus some strict transform $T_k$ of $\widetilde{S_k}$ on $W'$ dominates $Z'$. Now $K_{W'}+\Gamma' \sim_{\mathbb{R}} g^*C$ for some $\mathbb{R}$-Cartier divisor $C$ on $Z'$. By hypothesis of the induction on dimension, there exists an effective $\mathbb{R}$-divisor $D_{T_k}$ on $T_k$ such that 
$$(K_{W'}+\Gamma')|_{T_k}=K_{T_k}+\Gamma_{T_k} \sim_{\mathbb{R}}D_{T_k}.$$
Since $T_k$ dominates $Z'$, there also exists some effective $\mathbb{R}$-divisor $G$  such that $G \sim_{\mathbb{R}}C$. Thus $$K_{W'}+\Gamma' \sim_{\mathbb{R}} g^*G\geq0.$$ This implies the non-vanishing of $K_X+\Delta$. \end{proof}
 We finish the proof of Theorem \ref{main-real}.

\end{proof}

\section{Log non-vanishing theorem for rationally connected varieties}\label{RC-non}

From the same argument as the proof of Case \ref{QtoRinKLT} we see the following theorem:

\begin{thm}\label{main-RC-non-van}Assume that the global ACC conjecture (\ref{gacc}) and the ACC conjecture for log canonical thresholds (\ref{acclct}) in dimension $\leq n$. Let $X$ be a rationally connected variety of dimension $n$ and $\Delta$ an effective $\mathbb{Q}$-Weil divisor such that $K_X+\Delta$ is $\mathbb{Q}$-Cartier and $(X,\Delta)$ is kawamata log terminal. If $K_X+\Delta$ is pseudo-effective,  then there exists an effective $\mathbb{Q}$-Cartier divisor $D$ such that $D \sim_{\mathbb{Q}} K_X+\Delta$.
\end{thm}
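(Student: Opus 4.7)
The plan is to mimic the proof of Case \ref{QtoRinKLT} step by step, replacing the two ingredients that are unavailable in the rationally connected setting --- the $\mathbb{Q}$-non-vanishing hypothesis in dimension $n$ (which is the very statement we are proving) and the abundance conjecture in dimension $<n$ --- by the induction hypothesis of the theorem combined with two standard facts: the dominant image of a rationally connected variety is rationally connected, and a klt Fano variety is rationally connected (Q.~Zhang, Hacon--$\mathrm{M^{c}}$Kernan).

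First I would induct on $n$. After a small $\mathbb{Q}$-factorialization we may assume $X$ is $\mathbb{Q}$-factorial. Since $X$ is uniruled, $K_X$ is not pseudo-effective, so the threshold $t_0 := \inf\{t \in [0,1] : K_X + t\Delta \text{ is pseudo-effective}\}$ satisfies $0 < t_0 \leq 1$. Put $\tilde\Delta := t_0\Delta$ (an $\mathbb{R}$-divisor if $t_0\notin\mathbb{Q}$) and choose a rational increasing sequence $t_i \nearrow t_0$; then $\Delta_i := t_i\Delta$ is a $\mathbb{Q}$-approximating sequence with $\Delta_i \nearrow \tilde\Delta$, $(X,\tilde\Delta)$ klt, $K_X+\tilde\Delta$ pseudo-effective, and $K_X+\Delta_i$ not pseudo-effective. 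Apply Lemma \ref{term pe fibration lemma} to $(X,\tilde\Delta)$ with this sequence to produce a contracting birational map $\varphi\colon X \dashrightarrow X'$ and a contraction $f'\colon X' \to Z$ such that $\rho(X'/Z)=1$, $K_{X'}+\tilde\Delta' \equiv_{f'} 0$, $\tilde\Delta'-\Delta_i'$ is $f'$-ample for some $i$, and $\dim X > \dim Z$. Resolving $\varphi$ gives $\kappa_{\sigma}((K_X+\tilde\Delta)|_F) = 0$ on a general fiber $F$ of $f'\circ\varphi$.

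If $\dim Z = 0$, then $\kappa_\sigma(K_X+\tilde\Delta) = 0$ and Nakayama's abundance theorem for numerical Kodaira dimension zero gives $K_X+\tilde\Delta \sim_{\mathbb{R}}$ an effective $\mathbb{R}$-divisor; adding $(1-t_0)\Delta$ shows $K_X + \Delta \sim_{\mathbb{R}}$ an effective $\mathbb{R}$-divisor. If $\dim Z \geq 1$, then $Z$ is the dominant image of the rationally connected variety $X'$ (which is birational to $X$), so $Z$ is rationally connected of dimension $<n$; the general fiber $F$ satisfies $-K_F \equiv \tilde\Delta'|_F$ ample (since $\tilde\Delta'$ is $f'$-ample by $\rho(X'/Z)=1$ together with $\tilde\Delta'-\Delta_i'$ being $f'$-ample), hence $F$ is a klt Fano variety and thus rationally connected. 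Using the induction hypothesis on the rationally connected fibers in place of the lower-dimensional non-vanishing conjecture invoked in Remark \ref{rem_real_bir1}(1), together with Nakayama's abundance on the fibers (on which $\kappa_\sigma=0$) in place of the lower-dimensional abundance conjecture, I would construct a relative good minimal model of $(X,\tilde\Delta)$ over $Z$ and invoke Ambro's canonical bundle formula (cf.~\cite[Theorem 4.1]{ambro-canonical}, \cite[Theorem 3.1]{fg2}) to reduce to a rationally connected klt pair $(Z', \Gamma_{Z'})$ of dimension $<n$; the induction hypothesis on $Z'$ then gives non-vanishing of $K_{Z'}+\Gamma_{Z'}$, which pulls back and, together with $(1-t_0)\Delta$, yields once again $K_X+\Delta \sim_{\mathbb{R}}$ an effective $\mathbb{R}$-divisor.

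The hard part will be upgrading this $\mathbb{R}$-linear equivalence with an effective $\mathbb{R}$-divisor to a $\mathbb{Q}$-linear equivalence with an effective $\mathbb{Q}$-divisor for the $\mathbb{Q}$-Cartier class $K_X+\Delta$, since the threshold $t_0$ may be irrational and all intermediate representatives are $\mathbb{R}$-divisors. This will be resolved using rational connectedness: $\Pic^0(X)_{\mathbb{Q}} = 0$, so numerical equivalence coincides with $\mathbb{Q}$-linear equivalence on $\mathbb{Q}$-Cartier classes; a density argument then rationalizes the coefficients in the finite-dimensional rational polytope of effective representatives of $[K_X+\Delta] \in \NS(X)_{\mathbb{Q}}$ to produce an effective $\mathbb{Q}$-divisor $D \sim_{\mathbb{Q}} K_X+\Delta$.
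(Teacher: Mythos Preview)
Your induction does not close. You allow the pseudo-effective threshold $t_0$ to be irrational, so $\tilde\Delta=t_0\Delta$ is only an $\mathbb{R}$-divisor, and the boundary $\Gamma_{Z'}$ produced by Ambro's canonical bundle formula on the base is then an $\mathbb{R}$-divisor as well. But your induction hypothesis is the statement of the theorem, which concerns \emph{$\mathbb{Q}$-boundaries}; it therefore does not apply to $(Z',\Gamma_{Z'})$. Your final ``upgrade'' from $\mathbb{R}$- to $\mathbb{Q}$-effectivity is valid for the $\mathbb{Q}$-Cartier class $K_X+\Delta$ (and is in fact a general rational-polyhedron argument requiring nothing about $\Pic^0$), but it does nothing for the intermediate $\mathbb{R}$-Cartier class $K_{Z'}+\Gamma_{Z'}$ where you actually need to invoke induction. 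The paper sidesteps this by citing \cite[Proposition~8.7]{dhp-ext}: under the assumed global ACC, the pseudo-effective threshold is \emph{rational}, so one replaces $\Delta$ by $t_0\Delta$ while staying in $\mathbb{Q}$, and no upgrade is ever needed.

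There is a second, independent gap. Your appeal to Remark~\ref{rem_real_bir1}(1) to build a relative good minimal model over $Z$ is not justified: Birkar's argument there requires the non-vanishing conjecture for \emph{all} dlt pairs with $\mathbb{R}$-boundary in dimension $\leq n-1$, not merely for one rationally connected klt fiber, so your induction hypothesis cannot substitute for it. (In Case~\ref{QtoRinKLT} that input was available by the standing inductive assumption of Theorem~\ref{main-real}; here it is not.) The paper instead invokes \cite[Lemma~4.4]{gl1} (cf.~\cite{laigood}), which supplies a good minimal model over $Z$ directly from the fact that the general fiber already has one---and it does, since $\kappa_\sigma=0$ on the fiber and Nakayama's abundance applies. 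With these two corrections (rationality of $t_0$ via \cite[Proposition~8.7]{dhp-ext}, and \cite{gl1}/\cite{laigood} in place of Remark~\ref{rem_real_bir1}(1)) your outline collapses to the paper's proof.
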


\begin{proof}We show by induction on dimension. First, we may assume that $X$ is smooth by taking a log resolution of $(X,\Delta)$. From \cite[Proposition 8.7]{dhp-ext}, the pseudo-effective threshold of $\Delta$ for $K_X$ is also a rational number. Thus we may assume that $K_X+\Delta-\epsilon \Delta$ is not pseudo-effective for any positive number $\epsilon$. We take a decreasing sequence $\{\epsilon_i\}$ of positive numbers such that $\lim_{i \to \infty}\epsilon_i=0$. Let $\Delta_i=\Delta-\epsilon_i \Delta$. Then, by the same argument as the proof of Case \ref{QtoRinKLT}, we may assume that there exists a projective morphism $f:X \to Z$ of connected fibers to normal variety $Z$ such that $\kappa_{\sigma}((K_X+\Delta)|_{F})=0$ for a general fiber $F$ of $f$ and $\mathrm{dim}\,X > \mathrm{dim}\,Z$. Moreover we see that $Z$ is also a rationally connected variety. Then we see that Theorem \ref{main-RC-non-van} by \cite[Lemma\,4.4]{gl1} (cf. \cite{laigood}) and the hypothesis of induction. 
\end{proof}

\end{document}